\newcommand{\bQ}{\mathbb{Q}}
\newcommand{\bR}{\mathbb{R}}
\newcommand{\cL}{\mathcal{L}}
\newcommand\lra{\longrightarrow}
\newcommand\trf{\mathrm{trf}}
\newcommand\Diff{\mathrm{Diff}}
\renewcommand{\epsilon}{\varepsilon}
\newcommand{\SO}{\mathrm{SO}}
\newcommand{\GL}{\mathrm{GL}}
\mathchardef\ordinarycolon\mathcode`\:
\theoremstyle{plain}
\newtheorem{theorem}{Theorem}[section]
\newtheorem{proposition}[theorem]{Proposition}
\newtheorem{lemma}[theorem]{Lemma}
\newtheorem{corollary}[theorem]{Corollary}
\theoremstyle{definition}
\newtheorem{definition}[theorem]{Definition}
\newtheorem{example}[theorem]{Example}
\theoremstyle{remark}
\newtheorem{remark}[theorem]{Remark}
\newtheorem*{remark*}{Remark}
\numberwithin{equation}{section}
\title[Tautological rings and stabilisation]{Tautological rings and stabilisation}
\author{Oscar Randal-Williams}
\email{o.randal-williams@dpmms.cam.ac.uk}
\address{Centre for Mathematical Sciences\\
Wilberforce Road\\
Cambridge CB3 0WB\\
UK}
\begin{document}
\begin{abstract}
We construct a ring homomorphism comparing the tautological ring, fixing a point, of a closed smooth manifold with that of its stabilisation by $S^{2a} \times S^{2b}$.
\end{abstract}
\maketitle

\section{Introduction and statement of result}

\subsection{Tautological rings}

For a connected closed oriented smooth $d$-manifold $N$, the universal smooth fibre bundle with fibre $N$ may be described in terms of classifying spaces as
$$N \overset{i}\lra B\Diff^+(N, \star) \overset{\pi}\lra B\Diff^+(N).$$
Here $\Diff^+(N)$ denotes the topological group of orientation-preserving diffeomorphisms of $N$, and $\Diff^+(N, \star)$ denotes the subgroup of those diffeomorphisms fixing a marked point $\star \in N$. Assigning to a diffeomorphism fixing $\star \in N$ its derivative at this point gives a map
$$D_{\star} : B\Diff^+(N, \star) \lra B\GL_{d}^+(\bR) \simeq B\SO(d).$$
Using this we may pull back any cohomology class $c \in H^*(B\SO(d);\bQ)$ to give a class on $B\Diff^+(N, \star)$, which we continue to denote by $c$. We may then define classes
$$\kappa_c := \int_\pi c \in H^{|c|-d}(B\Diff^+(N);\bQ)$$
by integration along the fibres of the map $\pi$. These are known as tautological classes, $\kappa$-classes, or generalised Miller--Morita--Mumford classes. If $|c|=d$ then the degree zero cohomology class $\kappa_c$ is simply a characteristic number of $N$; the higher degree $\kappa_c$'s may be considered as analogues of characteristic numbers for families of manifolds.

The \emph{tautological ring} $R^*(N) \subset H^{*}(B\Diff^+(N);\bQ)$ is the subring generated by the classes $\kappa_c$. We may pull the classes $\kappa_c$ back along $\pi$ and hence also consider them as cohomology classes on $B\Diff^+(N, \star)$, where we continue to denote them by $\kappa_c$. A variant of $R^*(N)$, the \emph{tautological ring fixing a point} $R^*(N, \star) \subset H^{*}(B\Diff^+(N, \star);\bQ)$ is the subring generated by the classes $\kappa_c$ as well as the classes $c$.

\vspace{1ex}

\noindent\textbf{Context.} The rings $R^*(N)$ have been extensively studied in the case that $N$ is an oriented surface, as in this case $B\Diff^+(N)$ is a model for the moduli space of Riemann surfaces, cf.\ \cite{Mumford, Looijenga, Faber, Morita}. For manifolds of higher dimension they have recently been studied by Grigoriev, Galatius, and the author \cite{grigoriev-relations, galagriran-characteristic, RWPhenomena}, and in the case of 4-manifolds by Baraglia \cite{Baraglia}. In a different direction the \emph{vanishing} of tautological classes for various aspherical manifolds has been shown by Bustamante, Farrell, and Jiang \cite{BFJ}, and by Hebestreit, Land, L{\"u}ck, and the author \cite{HLLRW}. A variant of tautological rings for Poincar{\'e} complexes rather than manifolds has been studied by Prigge \cite{Prigge}.

\subsection{Main result}

The main result of this note concerns the case $d=2(a+b)$, and gives an explicit ring homomorphism $R^*(N \# S^{2a} \times S^{2b}, \star) \to R^*(N, \star)$. This is rather surprising because---as far as we can tell---there is no corresponding map $B\Diff^+(N, \star) \to B\Diff^+(N \# S^{2a} \times S^{2b}, \star)$, even at the level of rational cohomology groups.

In order to state our result we must first explain our conventions for describing the classes $c$. When $d=2n$ we have $H^*(B\SO(2n);\bQ) = \bQ[e, p_1, p_2, \ldots, p_{n-1}]$, the polynomial ring of the Euler class and Pontrjagin classes. There is a further Pontrjagin class, $p_n$, which agrees with $e^2$. Using this we may write any monomial in this ring as either $p_I$ or $e p_I$, with $I=(i_1, i_2, \ldots, i_r)$ having $1 \leq i_j \leq n$ and $p_I = p_{i_1} \cdots p_{i_r}$.

\begin{theorem}\label{thm:Stab}
Let $N$ be a $2(a+b)$-dimensional manifold. Then the formula
\begin{align*}
R^*(N \# S^{2a} \times S^{2b}, \star) &\lra R^*(N, \star)\\
\kappa_{p_I} & \longmapsto \kappa_{p_I}\\
\kappa_{ep_I} & \longmapsto \kappa_{ep_I} + 2 p_I\\
c &\longmapsto c
\end{align*}
gives a well-defined and surjective ring homomorphism.
\end{theorem}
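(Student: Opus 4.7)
The plan is to realise the homomorphism by a parametrised fibrewise connect-sum operation, and then read off the effect on tautological classes via a Poincar\'e--Hopf argument. Although, as the paper notes, there is no induced map of classifying spaces, the construction below can be applied to \emph{every} smooth $N$-bundle with a marked section, and in particular to the universal one, from which well-definedness of the ring homomorphism will follow.

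Given a smooth oriented bundle $\pi : E \to B$ with fibre $N$ and section $s : B \to E$, let $\nu = s^*T_\pi E$. I would excise an open tubular neighbourhood of $s(B)$ and glue in the associated twisted bundle $W_\nu := \Fr(\nu) \times_{\SO(d)} W$ with $W = (S^{2a} \times S^{2b}) \setminus \Int(D^d)$, where $W$ is equipped with an $\SO(d)$-action extending the standard disc rotation near $\partial W$. The resulting bundle $\pi' : E' \to B$ has fibre $N \# S^{2a} \times S^{2b}$; I would then choose a marked section $s'$ lying in $W_\nu$ so that $(s')^* T_{\pi'} E' \cong \nu$, which makes $c \mapsto c$ hold automatically.

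The formulas for $\kappa_c$ then come from the decomposition $E' = (E \setminus D(\nu)) \cup W_\nu$. For pure Pontryagin monomials $c = p_I$, fibrewise oriented-cobordism invariance together with the vanishing of all rational Pontryagin classes of $T(S^{2a}\times S^{2b})$ yields $\kappa_{p_I}(E') = \kappa_{p_I}(E)$. For mixed classes $c = ep_I$, I would use a Poincar\'e--Hopf argument: choose a generic section of $T_\pi E$ vanishing at $s(B)$ with index $+1$ and extend it across $W_\nu$. The zero locus on the $E \setminus D(\nu)$ side contributes $\kappa_{ep_I}(E) - p_I(\nu)$ (the subtraction accounting for the removed zero at $s(B)$), while the $W_\nu$ side contributes three zeros, coming from $\chi(W, \partial W) = \chi(S^{2a}\times S^{2b}) - \chi(D^d) = 3$. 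Arranging the $\SO(d)$-action on $W$ so that these three zeros occur at fixed points whose tangent spaces carry the standard $\SO(d)$-representation, the identification $W_\nu = \Fr(\nu)\times_{\SO(d)} W$ makes each such tangent space isomorphic to $\nu$, so each contributes $p_I(\nu)$. Summing gives $\kappa_{ep_I}(E') = \kappa_{ep_I}(E) - p_I(\nu) + 3\,p_I(\nu) = \kappa_{ep_I}(E) + 2\,p_I(\nu)$, as required.

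Applied to the universal $N$-bundle over $B\Diff^+(N,\star)$, this construction shows that any polynomial identity among the $\kappa_c$ and $c$ in $R^*(N\# S^{2a}\times S^{2b}, \star)$ is sent by the substitution in the theorem to a true identity in $R^*(N, \star)$, so the substitution descends to a well-defined ring homomorphism; surjectivity is then immediate, since $\kappa_{p_I}$ and $c$ map to themselves and $\kappa_{ep_I}$ is recovered as the image of $\kappa_{ep_I}$ minus twice the image of $p_I$. The main obstacle will be the equivariant geometry in the construction of $W_\nu$---arranging an $\SO(d)$-action on $W$ with the required fixed-point structure, and verifying that the $W_\nu$ side contributes exactly $3\cdot p_I(\nu)$. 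The $3 - 1 = 2$ cancellation here is what produces the coefficient $2$ in $\kappa_{ep_I} \mapsto \kappa_{ep_I} + 2p_I$.
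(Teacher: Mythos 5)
There is a genuine gap, and it sits at the very first step of your construction: the $\SO(d)$-space $W$ you need does not exist. To form $W_\nu = \Fr(\nu)\times_{\SO(d)} W$ you require an action of $\SO(2a+2b)$ on $W=(S^{2a}\times S^{2b})\setminus \Int(D^{d})$ restricting to the standard rotation action near the boundary sphere; equivalently, an $\SO(2a+2b)$-action on $S^{2a}\times S^{2b}$ with an isolated fixed point whose isotropy representation is the standard one. Such an action would be of cohomogeneity one (the principal orbits near the fixed point are $(d-1)$-spheres), and a short classification of the possible second singular orbit shows no such action exists on $S^{2a}\times S^{2b}$ for $a,b\geq 1$ (already for $S^2\times S^2$: the candidates are $D^4$, $S^3\times I$, or a twisted $I$-bundle over a $3$-dimensional orbit, none of which is $S^2\times S^2\setminus \Int(D^4)$). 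This is not a technical obstacle to be arranged later --- it is exactly the reason there is no map $B\Diff^+(N,\star)\to B\Diff^+(N\# S^{2a}\times S^{2b},\star)$ and the reason the theorem is surprising. Your Poincar\'e--Hopf numerology ($3-1=2$) does correctly predict the coefficient, and your reduction of well-definedness to ``apply the construction to the universal bundle'' is the right shape of argument, but the construction itself cannot be carried out with structure group $\SO(d)$.

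The fix requires two further ideas, which constitute the actual content of the proof. First, only the maximal torus $T=T^a\times T^b\leq \SO(2a)\times\SO(2b)$ acts on $S^{2a}\times S^{2b}$ fixing two points $m_0,m_1$ with linear isotropy, so the fibrewise connect-sum can only be performed after pulling back the universal $N$-bundle along $BT\to B\SO(2n)$, i.e.\ over a space $B\Diff^+_T(N,\varphi_N)$ on which the normal bundle of the section has structure group reduced to $T$. The resulting classes are computed by equivariant localisation at the four fixed points of the $T$-action (whose tangent representations are $\zeta,\zeta,\bar\zeta,\bar\zeta$), giving $\phi^*\kappa_{p_I}=0$ and $\phi^*\kappa_{ep_I}=4p_I$; combined with a Stokes/cobordism correction term $-2p_I$ coming from excising the gluing region (a fibrewise $S^{2n}$ with $\kappa_{ep_I}=2p_I$), this yields the coefficient $2$. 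Second, to descend the resulting identities from $H^*(B\Diff^+_T(N,\varphi_N);\bQ)$ back to $R^*(N,\star)\subset H^*(B\Diff^+(N,\star);\bQ)$ one needs injectivity of the pullback along $B\Diff^+_T(N,\varphi_N)\to B\Diff^+(N,\varphi_N)$, which follows from the Becker--Gottlieb transfer because the fibre $\SO(2n)/T$ has nonzero Euler characteristic. Without this descent step, even a correct $T$-equivariant construction only produces a homomorphism into a larger ring, not into $R^*(N,\star)$.
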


\begin{remark}\ 
\begin{enumerate}[(i)]

\item The \emph{tautological ring fixing a disc} $R^*(N, D^d) \subset H^{*}(B\Diff^+(N, D^d);\bQ)$ is the subring generated by the classes $\kappa_c$. There are natural ring homomorphisms
$$R^*(N) \lra R^*(N,\star) \lra R^*(N, D^d)$$
sending $\kappa_c$ to $\kappa_c$, and the second map sending $c$ to 0. Considering $\Diff^+(N, D^d)$ as the group of diffeomorphisms of $N \setminus \mathrm{int}(D^d)$ fixing the boundary, there are natural maps $B\Diff^+(N, D^d) \to B\Diff^+(N \# M, D^d)$, and these induce ring homomorphisms
$$R^*(N \# M, D^d) \lra R^*(N, D^d),$$
sending $\kappa_c$ to $\kappa_c$. Theorem \ref{thm:Stab} may be viewed as a refinement of this map which does not require an entire disc to be fixed, but only a point.

\item Tautological rings can equally well be defined for homeomorphism groups of topological manifolds, but for our method smoothness is used in an essential way (we use that $\mathrm{Diff}^+(\bR^d)$ is homotopy equivalent to a compact Lie group).

\end{enumerate}
\end{remark}

Our method can more generally be used to compare tautological rings of $N$ and $N \# M$ when $M$ is a $2n$-manifold with an $n$-torus action satisfying certain cohomological hypotheses. In Section \ref{sec:General} we develop our construction in this generality, in Section \ref{sec:PfThm1} we verify the cohomological hypotheses in the case $M=S^{2a} \times S^{2b}$, thereby proving Theorem \ref{thm:Stab}, and in Section \ref{sec:CP2Ex} we explain the analogous result in the case $M=\mathbb{CP}^2$.

\begin{example}
Composing with the inclusion $i : N \to B\Diff^+(N,\star)$ of the fibre of the universal bundle gives a ring homomorphism
\begin{align*}
R^*(N \# S^{2k} \times S^{2k}, \star) &\lra H^*(N ; \bQ)\\
\kappa_{p_I} & \longmapsto 0\\
\kappa_{ep_I} & \longmapsto 2 p_I(TN)\\
c &\longmapsto c(TN).
\end{align*}
So if $p_I(TN) \neq 0 \in H^*(N;\bQ)$ then $\kappa_{ep_I} \neq 0 \in R^*(N \# S^{2k} \times S^{2k}, \star)$.
\end{example}

\begin{example}\label{ex:W}
Writing $W_g^{4k} = \#^g S^{2k} \times S^{2k}$, one consequence of Theorem \ref{thm:Stab} is that the sequence of Krull dimensions $\mathrm{Kdim} R^*(W_g^{4k}, \star)$ is non-decreasing with $g$. The morphism $R^*(W_g^{4k}) \to  R^*(W_g^{4k}, \star)$ is injective by the Becker--Gottlieb transfer \cite{beckegottl-transfer}: if $\pi : E \to B$ denotes the universal $W_g^{4k}$-bundle then the composition $\trf_\pi^* \circ \pi^*$ is multiplication by the Euler characteristic $\chi(W_g^{4k}) = 2+2g \neq 0$ and so $\pi^*$ is injective. By \cite[Theorem A (ii)]{RWPhenomena} the morphism $R^*(W_g^{4k}) \to  R^*(W_g^{4k}, \star)$ is in addition finite, so it follows that these rings have the same Krull dimensions and so the sequence $\mathrm{Kdim} R^*(W_g^{4k})$ is also non-decreasing with $g$. 

This is in distinction with the manifolds $W_g^{4k+2} = \#^g S^{2k+1} \times S^{2k+1}$, as in \cite{galagriran-characteristic} it was shown that the sequence $\mathrm{Kdim} R^*(W_g^{4k+2})$ is $2k+1, 0, 2k, 2k, 2k,\ldots$ for $g=0,1,2,3,4,\ldots$.
\end{example}

\begin{example}
We have $R^*(S^{4k})= \bQ[\kappa_{ep_1}, \kappa_{ep_2}, \ldots, \kappa_{ep_{2k}}]$ (see \cite[Section 5.3]{galagriran-characteristic}), so by \cite[Theorem A (ii)]{RWPhenomena} the ring $R^*(S^{4k}, \star)$ also has Krull dimension $2k$. Thus $\mathrm{Kdim} R^*(W_g^{4k}) \geq 2k$ for all $g \geq 0$.
\end{example}

\begin{example}
In \cite[Corollary 4.18]{RWPhenomena} it was shown that $\mathrm{Kdim} R^*(S^2 \times S^2)$ is either 3 or 4, so it follows that $\mathrm{Kdim} R^*(W_g^{4}) \geq 3$ for all $g \geq 1$.
\end{example}

\begin{example}
In \cite[Proposition 1]{GKT} it was shown that for the $K3$ manifold $K$ one has $\kappa_{\cL_{i+1}} \neq 0 \in R^*(K)$ for $1 \leq i \leq 8$. As $\chi(K)=24 \neq 0$ the map $R^*(K) \to R^*(K, \star)$ is injective by the Becker--Gottlieb transfer as in Example \ref{ex:W}, so  $\kappa_{\cL_{i+1}} \neq 0 \in R^*(K, \star)$ and hence $\kappa_{\cL_{i+1}} \neq 0 \in R^*(K \# g S^2 \times S^2)$ for all $g \geq 0$ and $1 \leq i \leq 8$. 
\end{example}

\subsection*{Acknowledgements}
The author was partially supported by the ERC under the European Union's Horizon 2020 research and innovation programme (grant agreement No.\ 756444), and by a Philip Leverhulme Prize from the Leverhulme Trust.

\section{The general construction}\label{sec:General}

\subsection{Parametrised connect-sum}

Let $(M, m_0)$ and $(N, n_0)$ be $d$-dimensional connected manifolds with marked points, and choose charts $\varphi_M : \bR^d \to M$ and  $\varphi_N : \bR^d \to N$ around these marked points. 

\begin{definition}
Let $\Diff^+(M, \varphi_M)$ denote the group of diffeomorphisms $f :M \to M$ for which there exists an $A \in \SO(d)$ such that $f \circ \varphi_M = \varphi_M \circ A$, equipped with the $C^\infty$-topology. Define $\Diff^+(N, \varphi_N)$ in the same way.

More generally for a subset $X \subset M \setminus \varphi_M(\bR^d)$ let $\Diff^+(M, \varphi_M, X)$ denote the subgroup which fixes $X$ pointwise.
\end{definition}

These are just slightly unusual models for the group of diffeomorphisms fixing a point, as follows.

\begin{lemma}
The inclusion $\Diff^+(M, \varphi_M) \to \Diff^+(M, m_0)$ is a weak homotopy equivalence. Similarly for $\Diff^+(N, \varphi_N)$ and $\Diff^+(M, \varphi_M, X)$.
\end{lemma}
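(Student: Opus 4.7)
The plan is to exhibit both groups as total spaces of fibrations over spaces of embeddings and reduce to a question about embedding spaces.

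Let $\mathcal{E}$ denote the space of smooth embeddings $e : \bR^d \to M$ with $e(0) = m_0$ and $D_0 e$ orientation-preserving, equipped with the $C^\infty$-topology, and let $\mathcal{E}_0 \subset \mathcal{E}$ be the path-component of $\varphi_M$. The group $\Diff^+(M, m_0)$ acts on $\mathcal{E}_0$ by post-composition, transitively by isotopy extension, and the orbit map $f \mapsto f \circ \varphi_M$ is a locally trivial fibration with stabilizer
$$G := \{f \in \Diff^+(M, m_0) : f \circ \varphi_M = \varphi_M\}.$$
The subgroup $\Diff^+(M, \varphi_M) \subset \Diff^+(M, m_0)$ is by definition the preimage under this orbit map of the subspace $\mathcal{E}_{\SO} := \{\varphi_M \circ A : A \in \SO(d)\} \subset \mathcal{E}_0$, so restricting the fibration yields a commutative ladder of fibrations with common fibre $G$, base map $\mathcal{E}_{\SO} \hookrightarrow \mathcal{E}_0$, and total-space map the inclusion in question. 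By the long exact sequence of homotopy groups it therefore suffices to show that $\mathcal{E}_{\SO} \hookrightarrow \mathcal{E}_0$ is a weak homotopy equivalence.

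For this I would factor the inclusion through the linear charts $\mathcal{E}_{\GL^+} := \{\varphi_M \circ A : A \in \GL_d^+(\bR)\}$. A standard scaling homotopy deformation retracts $\mathcal{E}_0$ onto $\mathcal{E}_{\GL^+}$: read $e \in \mathcal{E}_0$ in the chart as $\tilde e = \varphi_M^{-1} \circ e$ near $0$; for $s \in (0,1]$ the family $\tilde e_s(x) := s^{-1}\tilde e(sx)$ interpolates between $\tilde e$ at $s=1$ and the linear map $D_0 \tilde e$ as $s \to 0$; patching $\varphi_M \circ \tilde e_s$ near $m_0$ to $e$ far away via a cutoff function produces a homotopy from $e$ to a linear chart. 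The further retraction $\mathcal{E}_{\GL^+} \simeq \mathcal{E}_{\SO}$ is the well-known deformation retract $\GL_d^+(\bR) \simeq \SO(d)$.

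The variant $\Diff^+(M, \varphi_M, X) \hookrightarrow \Diff^+(M, m_0, X)$ is proved by the same recipe with every diffeomorphism constrained to fix $X$ pointwise and $\mathcal{E}_0$ replaced by the path-component of $\varphi_M$ in the subspace of embeddings whose images are disjoint from $X$; since $X \cap \varphi_M(\bR^d) = \emptyset$, all relevant deformations can be supported near $m_0$ and so avoid $X$. The main technical obstacle is the cutoff step in the scaling homotopy: one must verify that the patched maps remain embeddings for all parameter values, which is where the $C^\infty$-topology and a sufficiently small patching region are essential.
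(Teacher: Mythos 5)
Your overall strategy---fibring both groups over spaces of charts and reducing to the equivalence $\SO(d)\hookrightarrow\GL_d^+(\bR)$---is a standard and reasonable way to flesh out what the paper only sketches in two lines (the paper instead invokes contractibility of the space of Riemannian metrics to reduce to the same linear fact). But there are two genuine gaps in your execution. First, the claim that $\Diff^+(M,m_0)$ acts transitively on the path component $\mathcal{E}_0\subset\Emb(\bR^d,M)$ with the orbit map a locally trivial fibration is false as stated: the isotopy extension theorem applies to embeddings of compact manifolds (or to compactly supported isotopies), not to embeddings of the open manifold $\bR^d$. Concretely, $\varphi_M$ can be isotoped through embeddings to one whose image is a small ball properly contained in $\varphi_M(\bR^d)$, and such an embedding need not lie in the orbit of $\varphi_M$: if $M=S^d$ and $\varphi_M$ is a chart with dense image, then every $f\circ\varphi_M$ with $f\in\Diff^+(M)$ again has dense image. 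So the ladder of fibrations you want over $\mathcal{E}_0$ does not exist. Second, the cutoff step in your scaling homotopy is not just a ``technical obstacle'' to be checked: patching $\varphi_M\circ\tilde{e}_s$ near $m_0$ to $e$ far away can destroy injectivity (points of the transition annulus can be sent onto points of the outer region), and there is no choice of ``sufficiently small patching region'' that works uniformly over a compact family of embeddings, which is what a weak-equivalence argument requires.

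Both problems are repaired by the standard device of restricting to a compact disc rather than all of $\bR^d$: the restriction map $\Diff^+(M,m_0)\to\Emb((D^d,0),(M,m_0))$ (and its variants fixing $X$) is a Serre fibration by genuine isotopy extension; the scaling $x\mapsto e(sx)$, suitably rescaled in the chart, is a deformation of the space of disc embeddings onto linear ones with \emph{no} cutoff needed, identifying the relevant component of the base with $\GL_d^+(\bR)\simeq\SO(d)$; and $\Diff^+(M,\varphi_M)\to\SO(d)$, $f\mapsto A$, is the corresponding fibration for the subgroup. One must then also compare the fibres (diffeomorphisms equal to the identity on all of $\varphi_M(\bR^d)$ versus on $\varphi_M(D^d)$ only), which is another shrinking argument. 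Without these corrections the proof as written does not go through.
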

\begin{proof}[Proof sketch]
Combine the facts (i) that the inclusion $\SO(d) \to \GL^+_d(\bR)$ into the space of invertible matrices with positive determinant is an equivalence, and (ii) that the space of Riemannian metrics on $M$ is contractible.
\end{proof}

Taking the derivative at the marked point $m_0 = \varphi_M(0)$ gives a homomorphism
$$D_{m_0} : \Diff^+(M, \varphi_M, X) \lra \SO(d),$$
and similarly with $n_0 = \varphi_N(0)$ gives
$$D_{n_0} : \Diff^+(N, \varphi_N) \lra \SO(d).$$
Let $r : \SO(d) \to \SO(d)$ be given by conjugating by a reflection: the induced map $Br : B\SO(d) \to B\SO(d)$ corresponds to reversing orientation. For compactness we write $\overline{D}_{m_0} := r \circ D_{m_0}$. Using these maps we can form a homotopy pullback square
\begin{equation}\label{eq:MainSq}
\begin{tikzcd}
\mathcal{G} \arrow[rr, "f_M"] \arrow[d, "f_N"] &&  B\Diff^+(M, \varphi_M,  X) \arrow[d, "\overline{D}_{m_0}"]\\
B\Diff^+(N, \varphi_N) \arrow[rr, "D_{n_0}"] && B\SO(d).
\end{tikzcd}
\end{equation}
The space $\mathcal{G}$ (for ``glue") is equipped with the following data: 
\begin{enumerate}[(i)]
\item an oriented orthogonal vector bundle $V \to \mathcal{G}$, 
\item a smooth oriented $N$-bundle $\pi_N : E_N \to \mathcal{G}$ with an orientation-preserving embedding ${s}_N : V \to E_N$,
\item a smooth oriented $M$-bundle $\pi_M : E_M \to \mathcal{G}$ with an orientation-reversing embedding ${s}_M : V \to E_M$ and a disjoint embedding $\mathcal{G} \times X \to E_M$.
\end{enumerate}
Furthermore $\mathcal{G}$ is the universal example of a space equipped with this data. For a characteristic class
$$c \in  H^*(B\SO(d);\bQ)$$
we write $c = c(V) \in H^*(\mathcal{G};\bQ)$ for its pullback to $\mathcal{G}$. We also write $q : S(\bR \oplus V) \to \mathcal{G}$ for the associated $d$-dimensional sphere bundle.

\begin{proposition}\label{prop:Cobordism}\mbox{}
\begin{enumerate}[(i)]
\item\label{it:Cobordism:1} There is a smooth oriented ${M} \# N$-bundle $\pi_{{M} \# N} : E_{{M} \# N} \to \mathcal{G}$ which is equipped with an embedding $\mathcal{G} \times X \to E_{{M} \# N}$ over $\mathcal{G}$. 

\item\label{it:Cobordism:2} There is a bundle of oriented cobordisms over $\mathcal{G}$
$$W : E_{{M}} \sqcup E_N \leadsto E_{{M} \# N} \sqcup S(\bR \oplus V),$$
 which is equipped with 
\begin{enumerate}[(a)]
\item an embedding $\mathcal{G} \times [0,1] \times X  \to W$ over $\mathcal{G}$, extending the embeddings of $\mathcal{G} \times X$ into $E_M$ and $E_{M \# N}$,
\item a $d$-dimensional oriented vector bundle which restricts to the vertical tangent bundle over the boundary.
\end{enumerate}
\end{enumerate}
\end{proposition}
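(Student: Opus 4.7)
The plan is to extract the required geometric data directly from the universal structure carried by $\mathcal{G}$ as the homotopy pullback (\ref{eq:MainSq}). Over $\mathcal{G}$ one has smooth oriented bundles $\pi_M : E_M \to \mathcal{G}$ and $\pi_N : E_N \to \mathcal{G}$, together with smooth embeddings $s_M : D(V) \hookrightarrow E_M$ (orientation-\emph{reversing}, because $\overline{D}_{m_0}$ is composed with the orientation-reversing reflection $r$) and $s_N : D(V) \hookrightarrow E_N$ (orientation-preserving), whose common boundary sphere bundle is canonically identified with $S(V) \to \mathcal{G}$, as well as a disjoint embedding $\mathcal{G} \times X \hookrightarrow E_M$.

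For part (i) I would simply set
$$E_{M \# N} := \big(E_M \setminus s_M(\Int D(V))\big) \cup_{S(V)} \big(E_N \setminus s_N(\Int D(V))\big),$$
which performs the fiberwise oriented connect-sum; the opposite orientation conventions on $s_M$ and $s_N$ are exactly what is needed to make the two induced collar orientations on $S(V)$ glue into an oriented manifold. Since $\mathcal{G} \times X \hookrightarrow E_M$ is disjoint from $\varphi_M(\bR^d) \supset s_M(D(V))$, it survives the excision and yields the required embedding into $E_{M \# N}$.

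For part (ii) I would build $W$ by gluing three standard $(d+1)$-dimensional pieces:
$$W := \big(E_M \times [0, \tfrac{1}{2}]\big) \;\cup_\alpha\; \big(S(\bR \oplus V) \times [0,1]\big) \;\cup_\beta\; \big(E_N \times [0, \tfrac{1}{2}]\big),$$
where $D_-(V), D_+(V) \subset S(\bR \oplus V)$ denote the two hemispheres meeting along the equator $S(V)$, the map $\alpha$ identifies $s_M(D(V)) \times \{\tfrac{1}{2}\}$ with $D_-(V) \times \{0\}$, and $\beta$ identifies $s_N(D(V)) \times \{\tfrac{1}{2}\}$ with $D_+(V) \times \{0\}$. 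After fiberwise corner-smoothing this is a smooth oriented $(d+1)$-bundle over $\mathcal{G}$. Direct inspection then shows that the incoming face at $t=0$ is $E_M \sqcup E_N$, that at $t=\tfrac{1}{2}$ the two cut-open cylinders $(E_M \setminus s_M(\Int D(V))) \times \{\tfrac{1}{2}\}$ and $(E_N \setminus s_N(\Int D(V))) \times \{\tfrac{1}{2}\}$ are joined along the equator $S(V)$ (inherited from the middle piece) to form $E_{M \# N}$, and that the top face of the middle piece contributes $S(\bR \oplus V) \times \{1\} = S(\bR \oplus V)$ as the second outgoing component. The embedding of $\mathcal{G} \times [0,1] \times X$ arises from the disjoint embedding into $E_M$ composed with a reparametrisation $[0,1] \cong [0, \tfrac{1}{2}]$, landing inside the $E_M \times [0, \tfrac{1}{2}]$ piece and extending the given embeddings at both endpoints. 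The $d$-dimensional oriented vector bundle on $W$ is assembled from the pullbacks of $T_{\pi_M} E_M$, of $T_{\pi_N} E_N$, and of the vertical tangent bundle of $S(\bR \oplus V) \to \mathcal{G}$; these match along $\alpha$ and $\beta$ because in each case the restriction to the glued disk is canonically the pullback of $V$.

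The main technical point will be to check that all the orientation conventions cohere: the orientation-reversing nature of $s_M$, combined with the canonical boundary orientation on $S(\bR \oplus V) \subset D(\bR \oplus V)$, must give precisely the signs required both for $W$ to be a coherently oriented cobordism and for the assembled $d$-dimensional bundle to restrict on each of the four boundary components to the corresponding vertical tangent bundle.
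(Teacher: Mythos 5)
Your construction is correct in outline but genuinely different from the paper's, most visibly in part (\ref{it:Cobordism:2}). The paper builds $W$ from a local Morse-theoretic model: the elementary cobordism $W'_{\text{loc}} \subset \bR \times U$ with Morse function $f(z,u)=|u|^2-z^2$, with the unit disc around the critical point excised so that the extra boundary sphere $S(\bR\oplus U)$ appears, and with the $d$-dimensional bundle taken to be $\tau = \ker Df$. The virtue of that choice is that $\tau$ restricts to the tangent bundles of the level sets essentially by definition, and the only identification requiring real care --- $\tau\vert_{S(\bR\oplus U)} \cong TS(\bR\oplus U)$ --- is produced by the explicit map $(z,u)\mapsto(-z,u)$, whose effect on orientations can be read off. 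Your model instead attaches the cylinder $S(\bR\oplus V)\times[0,1]$ to $(E_M\sqcup E_N)\times[0,\tfrac12]$ along the two hemispheres, and assembles the bundle from the three vertical tangent bundles via the canonical identifications of each glued disc with $V$. This does work: the two free boundary rays at the equatorial corner (where three half-spaces meet in a chain) join $(E_M\setminus s_M(\Int D(V)))$ to $(E_N\setminus s_N(\Int D(V)))$ along $S(V)$, producing $E_{M\#N}$ after corner-smoothing, and the clutching data for the bundle exists because $T_{\pi_M}E_M\vert_{s_M(D(V))}$, $T_{\pi_N}E_N\vert_{s_N(D(V))}$ and $TS(\bR\oplus V)\vert_{D_\pm(V)}$ are all canonically the pullback of $V$. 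What you buy is a more elementary, global gluing with no Morse theory; what you lose is that the orientation bookkeeping no longer comes for free.

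That deferred orientation check is not a formality, and you should not leave it as ``the main technical point to be verified.'' It is exactly where the sign in Corollary \ref{cor:SumFormula} (hence the $+2p_I$ in Theorem \ref{thm:Stab}) is decided: one must verify both that $S(\bR\oplus V)\times\{1\}$ is an \emph{outgoing} boundary component with a definite orientation relative to $W$, and that your assembled bundle restricts there to $T_qS(\bR\oplus V)$ with the \emph{matching} orientation, since $\int_q e(T_qS(\bR\oplus V)) = +2$ versus $-2$ changes the theorem. The consistency you need on the gluing discs does hold --- $s_M$ is orientation-reversing and the lower hemisphere projects to $D(V)$ reversing orientation, while $s_N$ and the upper hemisphere are both orientation-preserving --- but this should be recorded explicitly, in the same way the paper records that the outgoing orientation on $S(\bR\oplus U)$ is the opposite of the one induced from $D(\bR\oplus U)$ and that the identification of $\tau$ with $TS(\bR\oplus U)$ is orientation-reversing, the two signs cancelling.
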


We write 
$$f_{M\# N}: \mathcal{G} \lra B\Diff^+({M} \# N, X)$$
for the map which classifies the oriented $M \# N$-bundle given by Proposition \ref{prop:Cobordism} (\ref{it:Cobordism:1}).

\begin{proof}[Proof of Proposition \ref{prop:Cobordism}]
We form $E_{{M} \# N}$ by gluing together 
$$E_{{M}} \setminus \mathrm{int}(D(V)),\quad D^1 \times S(V),\quad \text{ and }\quad E_{N} \setminus \mathrm{int}(D(V))$$
along the natural identifications 
$$\partial (E_{{N}} \setminus \mathrm{int}(D(V))) \cong \{-1\} \times S(V)\quad \text{ and} \quad \partial (E_{{M}} \setminus \mathrm{int}(D(V))) \cong \{+1\} \times S(V),$$
 with smooth structure induced by the evident radial collar of $D(V) \subset V$. We have $\mathcal{G} \times X \subset E_{{M}} \setminus \mathrm{int}(D(V)) \subset E_{{M} \# N}$, and $E_{M \# N}$ admits a unique orientation compatible with those of $E_{{N}} \setminus \mathrm{int}(D(V))$ and $E_{{M}} \setminus \mathrm{int}(D(V))$. This establishes (\ref{it:Cobordism:1}).

To produce the cobordism data we first make a local construction. Let us write $U$ for the inner product space $\bR^d$ with an orientation $\det(U) \in \Lambda^d U$. Consider the elementary cobordism $W'_{\text{loc}}$ between $S^0 \times D(U)$ and $D^1 \times S(U)$,  realised $\SO(U)$-equivariantly by a codimension zero submanifold with corners
$$W'_{\text{loc}}  \subset \bR \times U$$
equipped with the Morse function $f(z, u) = |u|^2 - z^2$ and the orientation $\tfrac{\partial}{\partial z} \wedge \det(U)$. Remove the unit disc around $(0,0)$ from $W'_{\text{loc}}$ to obtain $W_{\text{loc}}$ as shown in Figure \ref{fig:Morse}, whose new boundary component is $S(U \oplus \bR)$. We consider $W_{\text{loc}}$ as a $\SO(U)$-equivariant oriented cobordism of manifolds with boundary
$$W_{\text{loc}} : S^0 \times D(U) \leadsto D^1 \times S(U) \sqcup S(\bR \oplus U).$$
As usual, we equip the boundary components of an oriented cobordism with the induced orientation on outgoing boundaries and the opposite of the induced orientation on incoming boundaries. (The induced orientation is that which agrees with prepending the outwards-pointing normal vector.) Thus in this case the orientation on $\{-1\} \times D(U)$ agrees with that inherited from $U$, that on $\{+1\} \times D(U)$ is the opposite of that inherited from $U$, and that on $S(\bR \oplus U)$ is the opposite of that induced from $D(\bR \oplus U)$.

\begin{figure}[ht]
\begin{center}
\includegraphics[width=10cm]{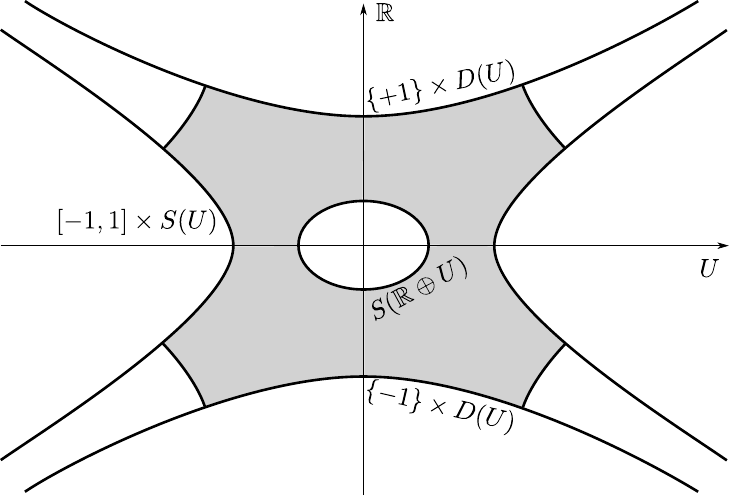}
\caption{The cobordism $W_{\text{loc}}$.}\label{fig:Morse}
\end{center}
\end{figure}

The kernel of the differential $Df$ defines a $\SO(U)$-equivariant $d$-dimensional oriented subbundle $\tau$ of $TW'_{\text{loc}}$ on the complement of the point $(0,0)$, restricting to the oriented tangent bundles of $\{-1\} \times D(U)$ and $D^1 \times S(U)$, and the oppositely-oriented tangent bundle over $\{+1\} \times D(U)$, respectively, as these are level sets of $f$. The bundle $\tau$ restricts to an oriented vector bundle of the same name on $W_{\text{loc}}$, and hence to an oriented vector bundle on $S(U \oplus \bR)$, which we now identify.

At the point $(z, u) \in S(\bR \oplus U)$ the differential $D_{(z,u)} f : \bR \oplus U \to \bR$ is given by inner product with $2(-z,u)$, so its kernel is identified with the tangent space $T_{(-z,u)} S(\bR \oplus U)$ though with the opposite orientation. As $(z,u) \mapsto (-z,u)$ gives a (orientation-reversing) diffeomorphism of $S(\bR \oplus U)$ commuting with the $\SO(U)$-action, there is $\SO(U)$-equivariant identification of oriented vector bundles between $\tau\vert_{S(\bR \oplus U)}$ and $T S(\bR \oplus U)$.

We implant this local construction as follows. Applying the local construction fibrewise the oriented orthogonal vector bundle $V \to \mathcal{G}$ gives a bundle of cobordisms of manifolds with boundary
$$W_{\text{loc}}(V) : S^0 \times D(V) \leadsto D^1 \times S(V) \sqcup S(\bR \oplus V)$$
over $\mathcal{G}$, equipped with a vector bundle $\tau(V)$ which agrees with the tangent bundle over the incoming and outgoing boundaries. We then construct $W$ as
$$\left([0,1] \times (E_{{M}} \setminus \mathrm{int}(D(V)) \sqcup E_{N} \setminus \mathrm{int}(D(V))) \right) \cup_{[0,1] \times S^0 \times S(V)} W_{\text{loc}}(V).$$
This is a bundle of oriented cobordisms $E_{{M}} \sqcup E_N \leadsto E_{{M} \# N} \sqcup S(\bR \oplus V)$, and contains $\mathcal{G} \times [0,1] \times X \subset [0,1] \times E_{{M}} \setminus \mathrm{int}(D(V))$. The bundle $\tau(V)$ extends to a vector bundle on $W$ by taking the vertical tangent bundle of $E_{{M}} \setminus \mathrm{int}(D(V)) \sqcup E_{N} \setminus \mathrm{int}(D(V))$. This establishes (\ref{it:Cobordism:2}).
\end{proof}

From now on let us suppose that $d=2n$, so that the available characteristic classes are
$$H^*(B\SO(2n);\bQ) = \bQ[p_1, p_2, \ldots, p_{n-1}, e].$$
As mentioned in the introduction, we use that $e^2 = p_n$ to write monomials in this ring as either $p_I$ or $e p_I$, with $I=(i_1, i_2, \ldots, i_r)$ having $1 \leq i_j \leq n$.

\begin{lemma}\label{lem:SphereCalc}
Let $V \to B$ be a $2n$-dimensional oriented orthogonal vector bundle, and $q : S(\bR \oplus V) \to B$ be the associated oriented $S^{2n}$-bundle. Then 
$$\kappa_{p_I}(q) = 0 \quad\quad \text{ and } \quad\quad \kappa_{ep_I}(q) = 2p_I(V).$$
\end{lemma}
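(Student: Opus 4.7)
The key observation is that the vertical tangent bundle $T_v q \to S(\bR \oplus V)$ of the sphere bundle is stably isomorphic to $q^* V$ as an oriented orthogonal bundle. Indeed, the outward radial normal of $S(\bR \oplus V) \subset \bR \oplus V$ gives a canonical isomorphism $T_v q \oplus \underline{\bR} \cong q^*(\bR \oplus V) = q^* V \oplus \underline{\bR}$, and cancelling the trivial summand on both sides yields the desired stable isomorphism. Hence $p_i(T_v q) = q^* p_i(V)$, and more generally $p_I(T_v q) = q^* p_I(V)$ for every multi-index $I$.

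By the projection formula, the first claim reduces to
\[ \kappa_{p_I}(q) = \int_q p_I(T_v q) = \int_q q^* p_I(V) = p_I(V) \cdot \int_q 1 = 0, \]
because $\int_q 1 \in H^{-2n}(B;\bQ)$ vanishes for $n \geq 1$. Similarly, using $p_I(T_v q) = q^* p_I(V)$ and the projection formula once more,
\[ \kappa_{ep_I}(q) = \int_q e(T_v q) \cdot p_I(T_v q) = \int_q e(T_v q) \cdot q^* p_I(V) = p_I(V) \cdot \int_q e(T_v q). \]

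The only nontrivial input left is the identity $\int_q e(T_v q) = 2 \in H^0(B;\bQ)$. This is the standard fact that for any smooth oriented bundle $\pi\colon E \to B$ with closed fibre $F$, fibre integration of the Euler class of the vertical tangent bundle equals $\chi(F)$. One clean proof is via the Becker--Gottlieb transfer: the transfer of $1 \in H^0(E)$ equals both $\chi(F)$ and $\int_\pi e(T_v \pi)$; alternatively one can appeal to the fibrewise Poincar\'e--Hopf theorem, or to the fact that a smooth $S^{2n}$-bundle admits a fibrewise Morse function with exactly two critical points of index $0$ and $2n$ (e.g.\ the height function in the $\bR$-direction of $S(\bR \oplus V)$). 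Since $\chi(S^{2n}) = 2$, this gives $\kappa_{ep_I}(q) = 2\,p_I(V)$, as required. I expect no real obstacle: the argument is entirely formal once the stable splitting of the vertical tangent bundle is noted and the fibre-Euler-characteristic identity is accepted.
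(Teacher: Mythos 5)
Your argument is correct and is essentially the same as the paper's: both use the stable isomorphism $T_q S(\bR \oplus V) \oplus \underline{\bR} \cong \underline{\bR} \oplus q^*V$ to see that the Pontrjagin classes of the vertical tangent bundle are pulled back from the base, then apply the projection formula together with $\int_q e(T_q S(\bR\oplus V)) = \chi(S^{2n}) = 2$. No issues.
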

\begin{proof}
We have $T_q S(\bR \oplus V) \oplus \bR \cong \bR \oplus q^*V$ and hence the cohomology classes $p_i(T_q S(\bR \oplus V)) = q^*p_i(V)$ are pulled back from the base. Then $\int_q p_I(T_q S(\bR \oplus V)) = \int_q q^*(p_I(V))=0$ by the projection formula (i.e.\ the fact that fibre integration is a map of modules over the cohomology of the base), and similarly
\begin{align*}
\int_q e(T_q S(\bR \oplus V))p_I(T_q S(\bR \oplus V)) &= \int_q e(T_q S(\bR \oplus V)) q^*(p_I(V))\\ 
&= \left(\int_q e(T_q S(\bR \oplus V))\right) p_I(V)\\
 &= \chi(S^{2n}) p_I(V) = 2 p_I(V).\qedhere
\end{align*}
\end{proof}

\begin{corollary}\label{cor:SumFormula}
There are identities
\begin{align*}
\kappa_{p_I}(\pi_{{M} \# N}) &= \kappa_{p_I}(\pi_M) + \kappa_{p_I}(\pi_N),\\
\kappa_{ep_I}(\pi_{{M} \# N}) &= \kappa_{ep_I}(\pi_M) + \kappa_{ep_I}(\pi_N) - 2p_I(V),
\end{align*}
in $H^*(\mathcal{G};\bQ)$.
\end{corollary}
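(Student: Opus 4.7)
The strategy is to apply cobordism invariance of fibrewise integration to the bundle of oriented cobordisms $W$ produced by Proposition~\ref{prop:Cobordism}(\ref{it:Cobordism:2}), and then to use Lemma~\ref{lem:SphereCalc} to identify the contribution of the auxiliary sphere-bundle boundary component.

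For a characteristic class $c\in H^*(B\SO(2n);\bQ)$, form the cohomology class $c(\tau) \in H^*(W;\bQ)$ via the oriented rank-$2n$ vertical bundle $\tau$ on $W$ supplied by Proposition~\ref{prop:Cobordism}(\ref{it:Cobordism:2}). The key property of $\tau$ is that it restricts on each of the four boundary components to the vertical tangent bundle of the corresponding manifold bundle, so $c(\tau)$ restricts to the tangential pullback of $c$ over each of $E_M$, $E_N$, $E_{M\# N}$, and $S(\bR\oplus V)$. Fibrewise Stokes, applied to the bundle of oriented manifolds with boundary $\bar\pi:W\to\mathcal{G}$ and the closed class $c(\tau)$, then yields
$$\int_{\pi_{M\# N}} c \;+\; \int_q c \;=\; \int_{\pi_M} c \;+\; \int_{\pi_N} c$$
in $H^*(\mathcal{G};\bQ)$, with the outgoing components $E_{M\# N}$ and $S(\bR\oplus V)$ on the left and the incoming components $E_M$ and $E_N$ on the right.

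Specialising to $c = p_I$ and to $c = ep_I$ and substituting $\int_q p_I = 0$ and $\int_q ep_I = 2p_I(V)$ from Lemma~\ref{lem:SphereCalc} produces the two asserted identities upon trivial rearrangement. The one place that genuinely demands care is the orientation bookkeeping across the four boundary components, but the orientation conventions set up in the proof of Proposition~\ref{prop:Cobordism}---notably the use of $\overline{D}_{m_0}$ rather than $D_{m_0}$ in the pullback square \eqref{eq:MainSq}, and the detailed orientation analysis of $W_{\text{loc}}$ and $\tau(V)\vert_{S(\bR\oplus V)}$---were arranged precisely so that the boundary orientations of $W$ match those of the four standalone manifold bundles, making the sign pattern above the correct one.
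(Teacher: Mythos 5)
Your argument is correct and is essentially identical to the paper's proof: both apply fibrewise Stokes to the bundle of cobordisms $W$ from Proposition \ref{prop:Cobordism}(\ref{it:Cobordism:2}) with its rank-$2n$ vertical bundle, obtain the same four-term identity of fibre integrals, and then substitute the sphere-bundle computation of Lemma \ref{lem:SphereCalc}. The remarks on orientation bookkeeping match the conventions set up in the proof of Proposition \ref{prop:Cobordism}.
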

\begin{proof}
Consider the bundle of cobordisms $W : E_{{M}} \sqcup E_N \leadsto E_{{M} \# N} \sqcup S(\bR \oplus V)$ constructed in Proposition \ref{prop:Cobordism} (\ref{it:Cobordism:2}), with its oriented $2n$-dimensional vector bundle which restricts to the vertical tangent bundles over the two ends. By Stokes' theorem, for any $c \in H^*(B\SO(2n);\bQ)$ we therefore have
$$\int_{\pi_{{M} \# N}} c(T_{\pi_{{M} \# N}} E_{{M} \# N}) + \int_q c(T_q S(\bR \oplus V))=  \int_{\pi_{{M}}} c(T_{\pi_{{M}}} E_{{M}}) + \int_{\pi_N} c(T_{\pi_N} E_N).$$
The result follows by using Lemma \ref{lem:SphereCalc}.
\end{proof}

\subsection{Torus actions}

If the $n$-torus $T$ acts on the $2n$-manifold $M$ fixing $m_0 \in M$ and $X = m_1 \in M$, then by choosing $\varphi_M$ to be an equivariant orthogonal chart around $m_0$ (obtained for example by exponentiating with respect to a $T$-invariant Riemannian metric) we have homomorphisms
$$T \overset{\phi}\lra \Diff^+(M, \varphi_M, m_1) \overset{\overline{D}_{m_0}}\lra \SO(2n).$$
We may then form the following commutative cube, in which the front face is \eqref{eq:MainSq}, the map $i^T : BT \to B\SO(2n)$ is $B(\overline{D}_{m_0} \circ \phi)$, and the remaining faces are developed by taking homotopy pullbacks.

\[
\begin{tikzcd}[row sep=2.5em, column sep = 0em]
\mathcal{G}_T \arrow[rr] \arrow[dr,swap, "i^\mathcal{G}"] \arrow[dd,swap] &&
  B\Diff^+_T(M, \varphi_M, m_1) \arrow[dd,swap, near start] \arrow[dr, "i^M"] \\
& \mathcal{G} \arrow[rr,crossing over, near start, "f_M"] &&
  B\Diff^+(M, \varphi_M, m_1) \arrow[dd, "\overline{D}_{m_0}"] \\
B\Diff^+_T(N, \varphi_N) \arrow[uu, dotted, bend left, "s^\phi"] \arrow[rr, near end, "D_{n_0}^T"] \arrow[dr,swap, "i^N"] && BT \arrow[dr,swap, "i^T"] \arrow[ur, "\phi", dotted] \arrow[uu, dotted, bend left] \\
& B\Diff^+(N, \varphi_N) \arrow[rr, "D_{n_0}"] \arrow[uu,<-,crossing over, near end, "f_N"]&& B\SO(2n)
\end{tikzcd}
\]

The lift $\phi$ determines a section $BT \to B\Diff^+_T(M, \varphi_M, m_1)$ and hence, by pullback, a section $s^\phi : B\Diff^+_T(N, \varphi_N) \to \mathcal{G}_T$. Furthermore, taking the derivative at $m_1$ gives a map
$$D_{m_1} : B\Diff^+(M, \varphi_M, m_1) \lra B\GL_{2n}^+(\bR) \simeq B\SO(2n).$$

\begin{lemma}\label{lem:CalcComp}
On tautological classes the composition
$$\varphi: B\Diff^+_T(N, \varphi_N) \overset{s^\phi}\lra \mathcal{G}_T \overset{i^\mathcal{G}}\lra \mathcal{G} \overset{f_{M \# N}}\lra B\Diff^+({M} \# N, m_1)$$
satisfies
\begin{align*}
\varphi^* \kappa_{p_I} &= (i^N)^*(\kappa_{p_I}) + (D_{n_0}^T)^*\phi^*(\kappa_{p_I})\\
\varphi^*\kappa_{ep_I} &= (i^N)^*(\kappa_{ep_I} - 2(D_{n_0})^*p_I) + (D_{n_0}^T)^*\phi^*(\kappa_{ep_I})\\
\varphi^* c &= (D_{n_0}^T)^*\phi^*(D_{m_1})^* c.
\end{align*}
\end{lemma}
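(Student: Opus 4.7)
The plan is to start from Corollary~\ref{cor:SumFormula}, which expresses $\kappa_{p_I}(\pi_{M \# N})$ and $\kappa_{ep_I}(\pi_{M \# N})$ on $\mathcal{G}$ as sums of tautological classes of $\pi_M$ and $\pi_N$ together with $-2 p_I(V)$. I would pull each of these identities back along $i^\mathcal{G}$ and then along $s^\phi$, identifying each summand separately by chasing the commutativity of the cube and using that $s^\phi$ is a section.

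To identify the $\pi_N$ contribution, the class $\kappa_\bullet(\pi_N) = f_N^* \kappa_\bullet$ by naturality of fibre integration, so commutativity of the left face of the cube together with the fact that $s^\phi$ is a section of the map $\mathcal{G}_T \to B\Diff^+_T(N, \varphi_N)$ makes $f_N \circ i^\mathcal{G} \circ s^\phi$ equal to $i^N$, giving $(i^N)^* \kappa_\bullet$. For the $\pi_M$ contribution, the top face of the cube gives $f_M \circ i^\mathcal{G} = i^M \circ f_M^T$ where $f_M^T : \mathcal{G}_T \to B\Diff^+_T(M, \varphi_M, m_1)$ is the top-back arrow; the key point is that, by construction of $s^\phi$ as the homotopy-pullback of the section $\phi$, one has the identity $f_M^T \circ s^\phi = \phi \circ D_{n_0}^T$, and hence the $\pi_M$ contribution becomes $(D_{n_0}^T)^* \phi^* \kappa_\bullet$. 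Finally, the class $p_I(V)$ on $\mathcal{G}$ is $(D_{n_0} \circ f_N)^* p_I$ by definition, and the same chase gives $(i^N)^* (D_{n_0})^* p_I$. Substituting these three ingredients into Corollary~\ref{cor:SumFormula} yields the first two identities.

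For the third identity, the class $c \in H^*(B\Diff^+(M \# N, m_1); \bQ)$ is by definition $D_{m_1}^* c$, and the crucial observation is that in the construction of $\pi_{M \# N}$ in Proposition~\ref{prop:Cobordism}(\ref{it:Cobordism:1}) the section $\mathcal{G} \times \{m_1\} \hookrightarrow E_{M \# N}$ actually lies inside $E_M \setminus \mathrm{int}(D(V)) \subset E_{M \# N}$. Consequently $D_{m_1} \circ f_{M \# N} = D_{m_1} \circ f_M$ as maps $\mathcal{G} \to B\SO(2n)$, so applying the same cube-chase as for the $\pi_M$ piece gives $\varphi^* c = (D_{n_0}^T)^* \phi^* D_{m_1}^* c$.

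The only real obstacle I anticipate is the bookkeeping needed to justify the identity $f_M^T \circ s^\phi = \phi \circ D_{n_0}^T$; this is where the homotopy-pullback structure of $\mathcal{G}_T$ really gets used, and once it is in hand the rest of the argument is a direct substitution of Corollary~\ref{cor:SumFormula}.
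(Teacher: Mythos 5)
Your proposal is correct and follows essentially the same route as the paper: pull back the identities of Corollary~\ref{cor:SumFormula} along $i^\mathcal{G} \circ s^\phi$, identify the $\pi_N$-terms with $(i^N)^*\kappa_\bullet$, the $\pi_M$-terms with $(D_{n_0}^T)^*\phi^*\kappa_\bullet$ via the section property, and $p_I(V)$ with $(i^N)^*(D_{n_0})^*p_I$, then handle the classes $c$ by observing that $D_{m_1}\circ f_{M\# N}\simeq D_{m_1}\circ f_M$. The only (immaterial) difference is that for this last square you invoke the inclusion $\mathcal{G}\times X\subset E_M\setminus\mathrm{int}(D(V))\subset E_{M\# N}$ from Proposition~\ref{prop:Cobordism}(\ref{it:Cobordism:1}), whereas the paper deduces it from the cobordism data of part (\ref{it:Cobordism:2}); both are valid.
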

\begin{proof}
By Corollary \ref{cor:SumFormula} we have
\begin{align*}
(f_{M \# N})^* \kappa_{p_I} &= \kappa_{p_I}(\pi_M) + \kappa_{p_I}(\pi_N)\\
(f_{M \# N})^*\kappa_{ep_I} &= \kappa_{ep_I}(\pi_M) + \kappa_{ep_I}(\pi_N) - 2p_I.
\end{align*}
When we pull this back to $B\Diff^+_T(N, \varphi_N)$ the classes $\kappa_c(\pi_N)$ can be written as $(i^N)^*\kappa_c$, and the classes $p_I$ can be written as $(i^N)^*(D_{n_0})^*p_I$. The classes $\kappa_c(\pi_M)$ pulled back to $B\Diff^+_T(N, \varphi_N)$ may be written as $(D^T_{n_0})^* \phi^* \kappa_c$.

Finally, Proposition \ref{prop:Cobordism} (\ref{it:Cobordism:2}) implies that
\[
\begin{tikzcd}
\mathcal{G} \arrow[r, "f_{M \# N}"] \arrow[d, "f_M"]& B\Diff^+({M} \# N, m_1) \arrow[d, "D_{m_1}"]\\
B\Diff^+(M, \varphi_M, m_1) \arrow[r, "D_{m_1}"] & B\SO(2n)
\end{tikzcd}
\]
commutes up to homotopy, which with the cube above shows that the composition
$$B\Diff^+_T(N, \varphi_N) \overset{s^\phi}\lra \mathcal{G}_T \overset{i^\mathcal{G}}\lra \mathcal{G} \overset{f_{M \# N}}\lra B\Diff^+({M} \# N, m_1) \overset{D_{m_1}}\lra B\GL_{2n}^+(\bR)$$
agrees with $B\Diff^+_T(N, \varphi_N) \overset{D^T_{n_0}}\to BT \overset{\phi}\to B\Diff^+(M, \varphi_M, m_1) \overset{D_{m_1}}\to B\GL_{2n}^+(\bR)$ up to homotopy.
\end{proof}

To proceed we require $\overline{D}_{m_0} \circ \phi : T \to\SO(2n)$ to be injective, in which case it is the inclusion of a maximal torus.

\begin{lemma}
The homomorphism $\overline{D}_{m_0} \circ \phi$ is injective if and only if $m_0 \in M$ is an isolated fixed point of this torus action.
\end{lemma}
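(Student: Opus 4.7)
\medskip
\noindent\textbf{Proof plan.}
The plan is to use the equivariant orthogonal chart $\varphi_M$ to translate the statement into linear algebra on $\bR^{2n}$. By the defining property of $\varphi_M$, the $T$-action on a neighborhood of $m_0$ is conjugate via $\varphi_M$ to the linear representation of $T$ on $\bR^{2n}$ through $D_{m_0}\circ\phi:T\to\SO(2n)$; the reflection $r$ entering the definition of $\overline{D}_{m_0}$ is an inner automorphism of $\SO(2n)$ and affects neither injectivity nor the fixed-point set, so I may argue with $\overline{D}_{m_0}\circ\phi$ just as well. Under this identification a germ of $m_0$ in $M^T$ corresponds to the $T$-fixed subspace of $\bR^{2n}$, so the claim reduces to: a homomorphism $\psi:T\to\SO(2n)$ from an $n$-torus is injective iff $(\bR^{2n})^{\psi(T)}=\{0\}$.

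For the forward direction, if $\psi$ is injective then $\psi(T)$ is an $n$-dimensional subtorus of $\SO(2n)$; since the rank of $\SO(2n)$ is $n$, this image is a maximal torus. Choosing coordinates it acts on $\bR^{2n}\cong\bC^n$ as $\diag(e^{i\theta_1},\ldots,e^{i\theta_n})$, with $n$ pairwise independent nontrivial weights, so $(\bR^{2n})^{\psi(T)}=\{0\}$ and therefore $m_0$ is isolated in $M^T$.

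For the reverse direction, suppose $m_0$ is isolated. Picking a maximal torus of $\SO(2n)$ containing $\psi(T)$, the representation $\bR^{2n}=\bigoplus_{j=1}^n V_j$ splits into two-dimensional weight spaces with characters $\chi_j:T\to S^1$, and the fixed subspace is $\bigoplus_{\chi_j=1}V_j$. Isolated-ness forces this to be trivial, hence every $\chi_j$ is nontrivial. The kernel $K=\bigcap_j\ker\chi_j=\ker\psi$ then acts trivially on $\bR^{2n}$ and so, via $\varphi_M$, trivially on an open neighborhood of $m_0$ in $M$; the fixed set of $K$ is a closed smooth submanifold of the connected manifold $M$ with nonempty interior and therefore equals $M$. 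Under the tacit hypothesis that the $T$-action on $M$ is effective, this forces $K=\{1\}$, so $\psi$ is injective. The main technical obstacle lies precisely in this last step: one has to upgrade triviality of the $K$-action near $m_0$ to global triviality on $M$, which uses the smoothness of fixed loci of compact Lie group actions together with the connectedness of $M$; merely knowing that all the $\chi_j$ are nontrivial is a strictly weaker condition than injectivity of $\psi$ (the $\chi_j$ could coincide), so the effectiveness input is indispensable.
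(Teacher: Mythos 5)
Your proof is correct, and for one of the two implications it takes a genuinely different route from the paper. The direction ``not isolated $\Rightarrow$ not injective'' is handled the same way in both: a nearby fixed point gives, via the equivariant chart, a nonzero fixed vector in $\bR^{2n}$, forcing the image of $T$ into some $\SO(2n-1)$, whose maximal torus has rank $n-1<n$. For the converse the paper argues contrapositively, asserting that a non-injective $\overline{D}_{m_0}\circ\phi$ has image a torus of rank $\leq n-1$ and hence fixes a tangent vector; you instead observe directly that an injective $\psi$ has maximal-torus image with all $n$ weights nontrivial (so the linear fixed space vanishes), and for ``isolated $\Rightarrow$ injective'' you route through the kernel $K=\ker\psi$: it acts trivially near $m_0$, hence --- since fixed sets of compact group actions are closed submanifolds and $M$ is connected --- on all of $M$, and effectiveness then kills $K$. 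Your insistence on effectiveness is not pedantry but a genuine catch: the paper's step ``not injective $\Rightarrow$ image of rank $\leq n-1$'' fails when the kernel is finite and nontrivial (e.g.\ $z\mapsto z^2:S^1\to\SO(2)$, realised by the double-speed rotation of $S^1$ on $S^2$, where $0$ is an isolated fixed point yet the isotropy homomorphism is not injective), so the lemma as literally stated needs the action to be effective --- a hypothesis the paper leaves implicit but which holds in both of its applications, where the relevant maps are visibly maximal-torus inclusions. In short, your argument is longer but closes a lacuna in the finite-kernel case; the paper's is shorter but only valid once effectiveness is assumed.
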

\begin{proof}
If this homomorphism is not injective then its image is a torus of rank $\leq n-1$, which may therefore be conjugated into the maximal torus of $\SO(2n-1) \leq \SO(2n)$: in this case $T$ fixes a tangent vector at $m_0$, and since $T$ acts linearly in a chart around this point it follows that $m_0$ is not an isolated fixed point.

Conversely, if $m_0$ is not an isolated fixed point then this torus fixes a non-zero vector in $T_{m_0} M$, so lies in some $\SO(2n-1)$, and hence cannot be injective (by dimension of the maximal torus of $\SO(2n-1)$).
\end{proof}

Thus if $m_0$ is an isolated fixed point then the map $\overline{D}_{m_0} \circ \phi : T \to \SO(2n)$ is the inclusion of a maximal torus. We use this in the following way. The homotopy fibre of the map $i^N$ is then $\SO(2n)/T$ which has non-zero Euler characteristic (it is the order of the Weyl group of $\SO(2n)$), and therefore just as in Example \ref{ex:W} the Becker--Gottlieb 
transfer shows that the ring homomorphism
$$(i^N)^* : H^*(B\Diff^+(N, \varphi_N);\bQ) \lra H^*(B\Diff^+_T(N, \varphi_N);\bQ)$$
is injective. Thus:
\begin{enumerate}[(i)]
\item If for each $c \in H^*(B\SO(2n);\bQ)$ we have $(D_{n_0}^T)^*\phi^*(\kappa_{c}) = (i^N)^* q_c$ for some (unique) $q_c \in R^*(N, \star)$, then the function
\begin{align*}
R^*({M} \# N) &\lra R^*(N, \star)\\
\kappa_{p_I} &\longmapsto \kappa_{p_I} + q_{p_I}\\
\kappa_{ep_I} &\longmapsto \kappa_{ep_I} + q_{e p_I} - 2p_I 
\end{align*}
is a well-defined ring homomorphism.

\item If in addition for each $c \in H^*(B\SO(2n);\bQ)$ we have $(D_{n_0}^T)^*\phi^*(D_{m_1})^* c = (i^N)^* r_c$ for some (unique) $r_c \in R^*(N, \star)$, then the function
\begin{align*}
R^*({M} \# N, \star) &\lra R^*(N, \star)\\
\kappa_{p_I} &\longmapsto \kappa_{p_I} + q_{p_I}\\
\kappa_{ep_I} &\longmapsto \kappa_{ep_I} + q_{e p_I} - 2p_I\\
c &\longmapsto r_c
\end{align*}
is a well-defined ring homomorphism.
\end{enumerate}

For these to hold we must impose conditions on the torus action on $M$. We do not try to pursue this in its greatest generality, and instead treat two special cases.

\section{Proof of Theorem \ref{thm:Stab}: Stabilisation by $S^{2a} \times S^{2b}$}\label{sec:PfThm1}

We consider $S^{2k} = (\bR^{2k})^+$ with the usual $\SO(2k)$-action, and let the standard maximal torus $T = T^{a+b} = T^{a} \times T^{b} \leq \SO(2a) \times \SO(2b)$ act on $M = S^{2a} \times S^{2b}$. We write $\zeta$ for the corresponding oriented $2(a+b)$-dimensional representation of $T$, and $\bar{\zeta}$ for the same representation with opposite orientation.

\begin{lemma}\label{lem:FPData}
The $T$-action on $S^{2a} \times S^{2b}$ fixes $\{(0,0), (0,\infty), (\infty,0), (\infty,\infty)\}$. The $T$-representations at these points are all isomorphic to $\zeta$, but as oriented representations they are isomorphic to the $\zeta$ at $\{(0,0), (\infty,\infty)\}$ and to $\bar{\zeta}$ at $\{(0,\infty), (\infty,0)\}$.
\end{lemma}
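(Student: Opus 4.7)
The plan is to identify the four fixed points, compute the unoriented $T$-representation on each tangent space using equivariant charts, and then determine the orientations separately.

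First I would identify the fixed set. The standard maximal torus $T^k \leq \SO(2k)$ acts on $\bR^{2k}$ as $k$ block-diagonal planar rotations, so fixes only $0 \in \bR^{2k}$; passing to $S^{2k} = (\bR^{2k})^+$ adjoins $\infty$. Hence the $T = T^a \times T^b$-fixed set on $S^{2a} \times S^{2b}$ is exactly the four corners $\{0,\infty\}\times\{0,\infty\}$.

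For the unoriented tangent representations: at $(0,0)$ the standard chart identifies $T_{(0,0)}(S^{2a}\times S^{2b})$ with $\zeta_a \oplus \zeta_b$ carrying the standard $T$-action. For a point involving $\infty$, use the inversion chart $\psi \colon \bR^{2k} \to S^{2k}$, $\psi(y) = y/|y|^2$ for $y \neq 0$ and $\psi(0) = \infty$, which is a smooth chart around $\infty$ since the transition function $y \mapsto y/|y|^2$ is smooth on $\bR^{2k}\setminus\{0\}$. Because $\SO(2k)$ acts by isometries we have $A\cdot\psi(y) = \psi(Ay)$, so $\psi$ is $\SO(2k)$-equivariant and the $T^k$-representation on $T_\infty S^{2k}$ is again $\zeta_k$ as an unoriented representation.

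The crucial step is tracking orientations. Orient $S^{2k}$ so that the standard chart at $0$ is orientation-preserving; then the orientation at $\infty$ induced by $\psi$ differs from the ambient one by the sign of $\det J_\psi$ on the overlap. A direct differentiation gives
$$J_\psi(y) = |y|^{-2}\bigl(I - 2\hat{y}\hat{y}^T\bigr), \qquad \hat{y} = y/|y|,$$
whose second factor is the Householder reflection through $\hat{y}^\perp$, so $\det J_\psi(y) = -|y|^{-4k} < 0$. Hence the chart $\psi$ reverses orientation, and as an oriented $T^k$-representation $T_\infty S^{2k} \cong \bar\zeta_k$. Combining the factors: at a fixed point of $S^{2a}\times S^{2b}$ the oriented tangent representation is $\zeta_a \oplus \zeta_b$ with the summands corresponding to $\infty$-coordinates individually reversed, which yields $\zeta$ when an even number of coordinates equal $\infty$ and $\bar\zeta$ otherwise, since reversing one summand flips the total product orientation. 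This gives exactly the four cases in the statement. The only real obstacle is getting the sign of $\det J_\psi$ correct; everything else is formal.
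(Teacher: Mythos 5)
Your proof is correct and follows essentially the same route as the paper: your inversion map $y \mapsto y/|y|^2$ is exactly the paper's ``reflection in the equator'' of $(\bR^{2k})^+$, an $\SO(2k)$-equivariant orientation-reversing involution swapping $0$ and $\infty$. The only difference is that you verify orientation-reversal by an explicit (and correct) Jacobian computation where the paper simply invokes that a reflection reverses orientation.
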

\begin{proof}
The $T^k$-action on $S^{2k} = (\bR^{2k})^+$ fixes precisely 0 and $\infty$. The normal representation at 0 is the standard oriented representation $T^k \leq \SO(2k)$. The orientation-reversing reflection in the equator interchanges the fixed points $0$ and $\infty$ and commutes with the $T^k$-action, so the normal representation at $\infty$ is the opposite of the standard representation. Taking products gives the claimed description.
\end{proof}

\begin{proof}[Proof of Theorem \ref{thm:Stab}]
For $m_0 = (0,0) \in S^{2a} \times S^{2b}$ with orthogonal chart given by the product of the two open upper hemispheres, and $X = m_1 = (0, \infty) \in S^{2a} \times S^{2b}$, there is a corresponding map
$$\phi : BT \lra B\Diff^+(S^{2a} \times S^{2b}, \varphi_{S^{2a} \times S^{2b}},  m_1).$$

\begin{lemma}
This satisfies $\phi^*\kappa_{p_I}=0$ and $\phi^* \kappa_{ep_I} =  (i^T)^*(4 p_I)$.
\end{lemma}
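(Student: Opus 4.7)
My plan is to compute $\phi^*\kappa_c$ as a fibre integral over the Borel construction and exploit the fact that, for a product torus action on a product of spheres, the vertical tangent bundle is \emph{stably} pulled back from the base. The map $\phi$ is $B$ of the homomorphism $\phi:T \to \Diff^+(S^{2a}\times S^{2b},\varphi,m_1)$, so it classifies the bundle $\pi: E := ET\times_T(S^{2a}\times S^{2b}) \to BT$, and $\phi^*\kappa_c = \int_\pi c(T_\pi E)$. Since the $T = T^a\times T^b$-action is a product, $\pi$ factors as a tower of two sphere bundles
$$E \xrightarrow{\,q_a\,} X \xrightarrow{\,q_b\,} BT,$$
where $q_b$ is the $S^{2a}$-bundle $S(\R\oplus \zeta_a) \to BT$ (pulled back from $BT^a$) and $q_a$ is the $S^{2b}$-bundle obtained by pulling back $S(\R\oplus \zeta_b) \to BT^b$ to $X$.

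The key step is to observe that applying the argument from the proof of Lemma~\ref{lem:SphereCalc} to each stage gives stable isomorphisms
$$T_{q_a}E \oplus \epsilon \cong q_a^*(\R\oplus \zeta_b), \qquad T_{q_b}X \oplus \epsilon \cong q_b^*(\R\oplus \zeta_a).$$
Since $T_\pi E = T_{q_a}E \oplus q_a^* T_{q_b}X$, summing gives $T_\pi E \oplus \epsilon^2 \cong \pi^*(\R^2 \oplus \zeta)$, where $\zeta = \zeta_a\oplus\zeta_b$ is the relevant $T$-representation. Consequently every Pontrjagin class of $T_\pi E$ is a pullback from $BT$: explicitly $p_i(T_\pi E) = \pi^* p_i(\zeta) = \pi^*(i^T)^* p_i$, because $i^T$ classifies $\zeta$ (up to orientation, which is irrelevant for Pontrjagin classes).

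From here the conclusion is immediate from the projection formula. For the first identity, using that $\int_\pi 1 = 0$ as the fibre is positive-dimensional,
$$\phi^*\kappa_{p_I} \;=\; \int_\pi \pi^*(i^T)^* p_I \;=\; (i^T)^* p_I \cdot \int_\pi 1 \;=\; 0.$$
For the second,
$$\phi^*\kappa_{ep_I} \;=\; \int_\pi e(T_\pi E)\cdot \pi^*(i^T)^* p_I \;=\; (i^T)^* p_I \cdot \int_\pi e(T_\pi E) \;=\; \chi(S^{2a}\times S^{2b})\cdot (i^T)^* p_I \;=\; (i^T)^*(4p_I),$$
using that fibrewise integration of the Euler class of the vertical tangent bundle is the fibrewise Euler characteristic, and $\chi(S^{2a}\times S^{2b}) = 4$.

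The only genuinely substantive step is the stable identification of $T_\pi E$ with $\pi^*\zeta$; this is what makes Pontrjagin classes vanish after fibre integration while the Euler class survives (the Euler class is not a pullback from the base, as otherwise its integral would also vanish). An alternative route would be Atiyah--Bott--Berline--Vergne localisation using the four fixed points listed in Lemma~\ref{lem:FPData}: the contributions $p_I/e$ at $(0,0),(\infty,\infty)$ cancel those of $p_I/(-e)$ at $(0,\infty),(\infty,0)$, while all four contributions to $ep_I$ equal $p_I$ and sum to $4p_I$. I prefer the iterated-sphere-bundle approach since it stays within the techniques already deployed in the paper.
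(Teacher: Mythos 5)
Your proof is correct, but it takes a genuinely different route from the paper's. The paper applies the localisation theorem in $T$-equivariant cohomology: by Lemma \ref{lem:FPData} the four fixed points all have tangent representation $\zeta$ up to orientation, with Euler classes $\pm e(\zeta)$ (two of each sign), so the localised fibre-integration formula gives $\phi^*\kappa_{p_I} = \sum \pm p_I(\zeta)/e(\zeta) = 0$ and $\phi^*\kappa_{ep_I} = 4p_I(\zeta)$ --- exactly the alternative you sketch in your closing sentence. Your argument instead exploits the product structure of the action: the Borel construction $ET\times_T(S^{2a}\times S^{2b})\to BT$ is the fibre product over $BT$ of the two sphere bundles $S(\bR\oplus\zeta_a)$ and $S(\bR\oplus\zeta_b)$, the stable isomorphism $T_qS(\bR\oplus V)\oplus\bR\cong\bR\oplus q^*V$ from Lemma \ref{lem:SphereCalc} applied to each stage of the tower shows that $T_\pi E$ is stably isomorphic to $\pi^*\zeta$, and then the projection formula finishes the job (together with $\int_\pi e(T_\pi E)=\chi(S^{2a}\times S^{2b})=4$, which one checks by restricting to a fibre since $H^0(BT;\bQ)\to H^0(\mathrm{pt};\bQ)$ is injective). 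Your route is more elementary --- it avoids equivariant localisation entirely and stays within the toolkit already used for Lemma \ref{lem:SphereCalc} --- and it makes conceptually transparent why $\kappa_{p_I}$ dies (its integrand is pulled back from the base) while $\kappa_{ep_I}$ survives. What it gives up is generality: it depends on the action being a product of linear sphere actions, whereas the localisation computation needs only isolated fixed points and therefore carries over verbatim to the $\mathbb{CP}^2$ calculation of Section \ref{sec:CP2Ex}, where no such iterated sphere-bundle structure is available.
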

\begin{proof}
We will use the localisation theorem in $T$-equivariant rational cohomology $H^*_T(-)$, in the following special case; see \cite[p.\ 366]{AP} for an exposition. 

Let $T$ act on an oriented $2n$-manifold $M$ with isolated fixed points $\{x_1, \ldots, x_r\}$, and let $e_j \in H^*_T = H^*_T(pt)$ denote the Euler class of the oriented $T$-representation on the tangent space $T_{x_j} M$. Let $S \subset H^*_T$ denote the multiplicative subset of non-zero elements. Then the $S$-localised fibre-integration map 
$$\int_\pi : S^{-1} H^*_T(M) \lra S^{-1}H^{*-2n}_T$$
for the bundle $\pi : M \times_T ET \to \{*\} \times_T ET = BT$ may be expressed as
$$\int_\pi t = \sum_{j=1}^r \frac{t\vert_{x_j}}{e_j}.$$
As $H^*_T$ is an integral domain this determines the unlocalised fibre-integration map.

We apply this to the $T$-action on $S^{2a} \times S^{2b}$. By Lemma \ref{lem:FPData}, tangent representations at all fixed points are isomorphic to $\zeta$, so have Pontrjagin classes $p_i(\zeta)$, but taking orientation into account the Euler classes of the tangent representations at $(0,0)$ and $(\infty, \infty)$ are $e(\zeta)$ and at $(0,\infty)$ and $(\infty, 0)$ are $e(\bar{\zeta}) = - e(\zeta)$. Thus we have
$$\phi^*\kappa_{p_I} = \int_\pi p_I(T(S^{2a} \times S^{2b})) = \frac{p_I(\zeta)}{e(\zeta)}+ \frac{p_I(\zeta)}{e(\zeta)} + \frac{p_I(\zeta)}{-e(\zeta)} + \frac{p_I(\zeta)}{-e(\zeta)}=0$$
and similarly
\begin{align*}
\phi^*\kappa_{ep_I}  &= \frac{e(\zeta)p_I(\zeta)}{e(\zeta)}+ \frac{e(\zeta)p_I(\zeta)}{e(\zeta)} + \frac{-e(\zeta)p_I(\zeta)}{-e(\zeta)} + \frac{-e(\zeta)p_I(\zeta)}{-e(\zeta)}\\
&= 4 p_I(\zeta) = (i_T)^*(4 p_I).\qedhere
\end{align*}
\end{proof}

As the oriented tangent representation at $m_1 \in S^{2a} \times S^{2b}$ is isomorphic but with opposite orientation to that at $m_0 \in S^{2a} \times S^{2b}$, so that $D_{m_1} \simeq \overline{D}_{m_0}$, we have
\begin{align*}
(D_{n_0}^T)^*\phi^*(\kappa_{p_I}) &= 0\\
(D_{n_0}^T)^*\phi^*(\kappa_{ep_I}) &= (i^N)^*(D_{n_0})^*(4 p_I)\\
(D_{n_0}^T)^*\phi^*(D_{m_1})^* ({c}) &= (i^N)^*(D_{n_0})^*(c).
\end{align*}
By the discussion above the formula
\begin{align*}
R^*({S^{2a} \times S^{2b}} \# N, \star) &\lra R^*(N, \star)\\
\kappa_{p_I} & \longmapsto \kappa_{p_I}\\
\kappa_{ep_I} & \longmapsto \kappa_{ep_I} + 2 p_I\\
c & \longmapsto c
\end{align*}
is then a well-defined ring homomorphism, which is clearly surjective. This proves Theorem \ref{thm:Stab}.
\end{proof}

\begin{remark}
In this argument we could have chosen the fixed point $m_1 = (\infty, \infty)$, whose tangential $T$-representation is oriented isomorphic with that at $m_0$. This has the disconcerting effect that the formula
\begin{align*}
R^*({S^{2a} \times S^{2b}} \# N, \star) &\lra R^*(N, \star)\\
\kappa_{p_I} & \longmapsto \kappa_{p_I}\\
\kappa_{ep_I} & \longmapsto \kappa_{ep_I} + 2 p_I\\
c &\longmapsto \bar{c}
\end{align*}
also gives a well-defined ring homomorphism, where $c \mapsto \bar{c}$ is the automorphism of $H^*(B\SO(2n);\bQ)$ induced by conjugation by a reflection.
\end{remark}

\section{Stabilisation by $\mathbb{CP}^2$}\label{sec:CP2Ex}

As a further example of the method, consider $M = {\mathbb{CP}}^2$ with the 2-torus action
\begin{align*}
S^1 \times S^1 \times {\mathbb{CP}}^2 &\lra {\mathbb{CP}}^2\\
(\xi_1, \xi_2, [z_0 : z_1: z_2]) &\longmapsto [z_0 : \xi_1 z_1 : \xi_2 z_2].
\end{align*}
We let $m_0 = [1 : 0 : 0]$ and $m_1 = [0 : 1 : 0]$; the third fixed point is $[0:0:1]$. The torus action gives a map
$$\phi : BT \lra B\Diff^+({\mathbb{CP}}^2, \varphi_{{\mathbb{CP}}^2}, m_1).$$
The map $D_{m_0} \circ \phi$ is induced by the standard inclusion $T \to \SO(4)$ of a maximal torus. If we let $H^*(BT;\bQ)=\bQ[x_1, x_2]$ then the representation at $m_0$ has $e = x_1 x_2$ and $p_1 = x_1^2 + x_2^2$. At the other fixed points we have $e = x_1(x_1-x_2)$ and $p_1 = x_1^2 + (x_2 - x_1)^2$, and $e = x_2(x_2-x_1)$ and $p_1 = x_2^2 + (x_1 - x_2)^2$, so by localisation in equivariant cohomology we may calculate
\begin{align*}
\phi^* \kappa_{e^a p_1^b} &=  \frac{(x_1 x_2)^a (x_1^2 + x_2^2)^b}{x_1 x_2} + \frac{(x_1^2 - x_1x_2)^a (x_1^2 + (x_2 - x_1)^2)^b}{x_1^2 - x_1x_2}\\
& \quad\quad\quad\quad\quad\quad + \frac{(x_2^2 - x_1 x_2)^a (x_2^2 + (x_1 - x_2)^2)^b}{x_2^2 - x_1 x_2}.
\end{align*}
When expanded out this is a polynomial in the $x_i$, and in fact is an even symmetric polynomial in these variables and so can be written in terms of $e = x_1 x_2$ and $p_1 = x_1^2 + x_2^2$. Call the resulting polynomial $q_{a,b}(e, p_1)$. The first few are
$$q_{0,1} = 3;\quad q_{1,0} = 3; $$
$$q_{0,2} = 7p_1 - 7e; \quad q_{1,1} = 4p_1 - 4e;\quad q_{2,0} = p_1-e;$$
$$q_{0,3} = 13(p_1^2 + e^2 -2 e p_1);\quad q_{1,2} = 6(p_1^2 + e^2 -2e p_1);\quad q_{2,1} = 2(p_1^2 + e^2 - 2 e p_1);$$
and $q_{3,0} = p_1^2 + e^2 - 2 e p_1$.

It then follows from our general discussion that the formula
\begin{align*}
R^*(N^4 \# \mathbb{CP}^2) &\lra R^*(N^4, \star)\\
\kappa_{e^a p_1^b} & \longmapsto \kappa_{e^a p_1^b} + \begin{cases}
 q_{a,b}(e, p_1) - 2 e^{a-1} p_1^b & \text{if $a$ is odd}\\
 q_{a,b}(e, p_1)  & \text{if $a$ is even}
\end{cases}
\end{align*}
gives a well-defined ring homomorphism. (This cannot be promoted to a ring homomorphism from $R^*(N^4 \# \mathbb{CP}^2, \star)$, because the Euler and Pontrjagin classes of the normal $T$-representation at $[0 : 1 : 0]$ (or $[0:0:1]$) cannot be expressed in terms of those at $m_0$.)

\begin{remark}
Baraglia \cite{Baraglia} has recently determined the tautological rings of $\mathbb{CP}^2$ and $\mathbb{CP}^2 \# \mathbb{CP}^2$, and more generally given a gauge-theoretic technique for obtaining relations in tautological rings of definite 4-manifolds. Up to a change of variables the polynomials $q_{a,b}$ also arise there.
\end{remark}

\bibliographystyle{amsalpha}
\bibliography{biblio}

\end{document}